\newtheorem{defi}{Definition}
\newtheorem{thm}[defi]{Theorem}
\newtheorem{remark}[defi]{Remark}
\newtheorem{claim}[defi]{Claim}
\newcommand*{\myproofname}{Proof}
\newenvironment{claimproof}[1][\myproofname]{\begin{proof}[#1]}{\end{proof}}
\DeclareMathOperator{\lcm}{lcm}
\title{Resolution of Erd\H{o}s' problems about unimodularity}
\author{Stijn Cambie \thanks{Department of Computer Science, KU Leuven Campus Kulak-Kortrijk, 8500 Kortrijk, Belgium. Supported by a postdoctoral fellowship by the Research Foundation Flanders (FWO) with grant number 1225224N. Email: \protect\href{mailto:stijn.cambie@hotmail.com}{\protect\nolinkurl{stijn.cambie@hotmail.com}}}}
\begin{document}
\parindent=0cm
\maketitle

\begin{abstract}
    Letting $\delta_1(n,m)$ be the density of the set of integers with exactly one divisor in $(n,m)$, Erd\H{o}s wondered if $\delta_1(n,m)$ is unimodular for fixed $n$.
    We prove this is false in general, as the sequence $(\delta_1(n,m))$ has superpolynomially many local extrema. However, we confirm unimodality in the single case for which it occurs; $n = 1$.
    We also solve the question on unimodality of the density of integers whose $k^{th}$ prime is $p$.
\end{abstract}

\section{Introduction}\label{sec:intro}

In this note, we address the questions $690$ and $692$ from ~\url{https://www.erdosproblems.com} (\cite[page.~75, 78]{Er79e}), both in the negative.

Let $\delta_1(n,m)$ (resp. $\delta_r(n,m)$) be the density of the set of integers with exactly one (resp. $r$) divisor in $(n,m)=\{n+1,n+2, \ldots, m-1\}.$ 
 Erd\H{o}s wondereded if $\delta_1(n,m)$ is unimodular for fixed $n$, i.e., if $(\delta_1(n,m)_{m \ge n+2}$ has at most one local maximum.
With a computer program (\cite[doc. \texttt{Erdosproblem692\_n\_le20}]{C25github_EP692}), one can check that $(\delta_1(n,m))_{m \ge n+2}$ is not unimodular for small $n$ ($2\le n \le 20$), answering the question.

As an example, using the principle of inclusion-exclusion and a recursion, with $\Pr(r\mid x)$ denoting the probality that a random integer $x$ is a multiple of $r$, one can verify by hand that 
\begin{align*}
    \delta_1(3,6)&=\Pr(4 \mid x )+ \Pr(5 \mid x )-2\cdot \Pr(4,5 \mid x )  = \frac 14+\frac 15 - 2\cdot  \frac{1}{20}&&=\frac{7}{20} =0.35\\
    \delta_1(3,7)&= \frac 14 + \frac 15+\frac 16-2\left( \frac{1}{20} + \frac{1}{12}+\frac1{30} \right) +3\cdot \frac{1}{60}&&=\frac 13 \sim 0.33\\
    \delta_1(3,8)&=\frac 67 \cdot \delta_1(3,7)+\frac 17\cdot  \delta_0(3,7)=\frac 67\cdot \frac 13 + \frac 17 \cdot \frac{8}{15}&&= \frac{38}{105} \sim 0.36
\end{align*}
    
Since $\delta_1(3,7)< \min\{\delta_1(3,6), \delta(3,8) \}$ the sequence $( \delta_1(3,m) )_{m \ge 5}$ is not unimodal, from which one concludes. 
Inspired by communication with Thomas Bloom, we address the question more precisely.
We prove that the sequence has superpolynomially many local maxima in general, and is thus very far from being unimodal. 
Nonetheless, there is still one case in which the possible intuition about nice behaviour is true. When $n=1$, the sequence $(\delta_1(1,m))_m$ of densities of the set of integers with exactly one non-trivial divisor bounded by $m$ is unimodal, being non-increasing. The proofs can be found  in~\cref{sec:main}.

As a second result, let $d_k(p)$ be the density of integers whose $k^{th}$ prime is $p$.
Erd\H{o}s could not disprove unimodularity of the sequence $(d_k(p))_p$. We show that unimodularity is true for $k\le 3$ and give counterexamples for $k>3.$ This is done in~\cref{sec:main690}.


In the proofs, we will assume the reader is familiar with Landau notation $O(), \omega(), \Theta()$ for functions that are bounded from above by a multiple of an other function, by below, and by both respectively.

\section{Main proofs for problem $692$}\label{sec:main}

We first prove the one case for which Erd\H{o}s' question has a positive answer, since it is the more elementary result.

\begin{thm}
    The sequence $\delta_1(1,m)$ is non-increasing (and thus unimodular) in $m$.
\end{thm}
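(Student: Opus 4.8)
The plan is to bound the one-step difference $\delta_1(1,m+1)-\delta_1(1,m)$ and show it is never positive, for every $m\ge 3$ (the entry $\delta_1(1,2)=0$ is degenerate, since $(1,2)=\emptyset$, and anyway is not part of the sequence). Write $P^-(x)$ for the least prime factor of $x$ and $d_2(x)$ for the second smallest divisor of $x$, i.e.\ the least divisor of $x$ strictly above $P^-(x)$, with $d_2(x)=\infty$ when $x=1$ or $x$ is prime. Since $P^-(x)$ is the least divisor of $x$ exceeding $1$, an integer $x$ has exactly one divisor in $(1,m)$ if and only if $P^-(x)<m\le d_2(x)$. All sets of integers below are finite Boolean combinations of arithmetic progressions, so their natural densities (denoted $\mu$) exist and obey finite additivity and inclusion--exclusion; I use this without comment.

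\emph{Reducing to the prime case.} Passing from $(1,m)$ to $(1,m+1)$ only adjoins the potential divisor $m$, so comparing the two descriptions above the symmetric difference of the two relevant sets splits cleanly and one obtains
\[
  \delta_1(1,m+1)-\delta_1(1,m)=\mu\bigl(\{x:P^-(x)=m\}\bigr)-\mu\bigl(\{x:d_2(x)=m\}\bigr).
\]
If $m$ is composite, its least prime factor $a$ satisfies $1<a<m$ and divides every multiple of $m$, so $\{x:P^-(x)=m\}=\emptyset$ and the difference is $\le 0$. Hence only $m=p$ prime remains, where it suffices to prove $\mu(T_p)\ge\mu(S_p)$ with $S_p:=\{x:P^-(x)=p\}$ and $T_p:=\{x:d_2(x)=p\}$.

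\emph{The prime case.} Every $x\in S_p$ has all prime factors $\ge p$, so $s\nmid x$ for a prime $s<p$, and the divisors of $s^ax$ are precisely the $s^id$ with $0\le i\le a$ and $d\mid x$. Such a product lies in $(s,p)$ only when $d=1$, $i\ge 2$, and $s^i<p$, so $s^ax\in T_p$ exactly when $a=1$ or $s^2>p$. Over all admissible pairs $(s,a)$ the sets $\{s^ax:x\in S_p\}$ are pairwise disjoint subsets of $T_p$ (from the factorisation of $s^ax$ one reads off $s$, then $a$, then $x$); moreover $\{s^ax:x\in S_p\}$ is exactly the set of $y$ with $v_s(y)=a$, $p\mid y$, and no prime below $p$ other than $s$ dividing $y$, whence $\mu(\{s^ax:x\in S_p\})=s^{-a}\mu(S_p)$. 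Summing the disjoint contributions,
\[
  \mu(T_p)\ \ge\ \mu(S_p)\Bigl(\sum_{\substack{s<\sqrt p\\ s\text{ prime}}}\tfrac1s+\sum_{\substack{\sqrt p<s<p\\ s\text{ prime}}}\tfrac1{s-1}\Bigr)=:\mu(S_p)\,F(p),
\]
so it remains to prove $F(p)\ge 1$ for every prime $p\ge 3$. This is elementary: $F(3)=\tfrac1{2-1}=1$, $F(5)=\tfrac12+\tfrac1{3-1}=1$ and $F(7)=\tfrac12+\tfrac1{3-1}+\tfrac1{5-1}=\tfrac54$ are checked by hand, while for $p\ge 11$ both $2$ and $3$ lie below $\sqrt p$ and $5$ contributes at least $\tfrac15$, so $F(p)\ge\tfrac12+\tfrac13+\tfrac15=\tfrac{31}{30}>1$.

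\emph{Main difficulty.} The composite case and the existence of the densities are routine; the real content is the prime case. One must notice that $T_p$ contains an entire scale of dilates $s^a\cdot S_p$ of $S_p$, not merely $2\cdot S_p$ (a single multiplier only gives $\mu(T_p)\ge\tfrac12\mu(S_p)$, which is too weak), and because the resulting inequality $F(p)\ge 1$ is an equality at $p=3$ and $p=5$, the smallest primes have to be handled by direct computation rather than by a lossy estimate.
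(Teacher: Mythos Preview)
Your proof is correct and takes a genuinely different route from the paper. The paper argues by induction on $m$, tracking two quantities simultaneously: it maintains the auxiliary inequality $\delta_1(1,m)\ge\delta_0(1,m)$ (equivalently $A\ge\varphi(L)$) and uses it to show the one–step drop at $m-1=p$ prime, while the step at $m-1=p^2$ is handled by a separate recursion. In contrast, you bypass the induction entirely by writing
\[
\delta_1(1,m+1)-\delta_1(1,m)=\mu\{x:P^-(x)=m\}-\mu\{x:d_2(x)=m\},
\]
dispatching all composite $m$ at once, and for $m=p$ prime exhibiting $T_p$ as containing the disjoint dilates $s^aS_p$; the resulting inequality $F(p)\ge1$ replaces the paper's inductive hypothesis $A\ge\varphi(L)$. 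Your decomposition is in fact an equality ($T_p$ is exactly that disjoint union), so the borderline cases $p=3,5$ where $F(p)=1$ correspond precisely to the equalities $\delta_1(1,3)=\delta_1(1,4)$ and $\delta_1(1,5)=\delta_1(1,6)$. The paper's recursive setup has the advantage of yielding an efficient algorithm for computing $\delta_1(1,m)$ (as noted in its Remark), while your argument is shorter for the bare monotonicity statement and avoids carrying an auxiliary inequality through an induction. One small point: since natural density is only finitely additive, the infinite sum $\sum_{a\ge1}s^{-a}=\tfrac1{s-1}$ should strictly be obtained as a limit of finite partial sums, each giving a valid lower bound on $\mu(T_p)$; this is routine but worth a word.
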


\begin{proof}
    For fixed $m \in \mathbb N$, let the small primes be $2\le p_1, p_2, \ldots  p_r \le \sqrt{m-1}$ and the larger primes be $\sqrt{m}\le q_1, q_2, \ldots, q_s \le m-1.$
    Let $L= \prod_{i=1}^r p_i^2 \cdot \prod_{i=1}^s q_i.$
    Let $\varphi(L)$ and $A$ be the number of integers in $\{1,2,\ldots, L\}$ which have, respectively, zero or exactly one divisor in $\{2,3,\ldots, m-1\}.$
    Note that $A$ and $L$ are functions of $m$, but we won't write $m$ explicitly for ease of notation, and $\varphi$ is the Euler totient function
    
    Now $\delta_1(1,m)=\frac{A}{L},$ since a number has exactly one divisor in $\{2,3,\ldots, m-1\}$ if it is a multiple of exactly one prime in $\{p_1, \ldots, p_r, q_1, \ldots, q_s\}$ and not a multiple of $p_i^2$ for any $1 \le i \le r$, and it is thus only dependent on its residue modulo $L.$ Similarly $\delta_0(n,m)=\frac{ \varphi{(L)}}{L}.$
    
    We will prove two properties in parallel;
    $\frac{A}{L}$ is non-increasing in $m$ and $A \ge \varphi(L)$ for every $m \ge 3.$

    In the base case, where $m=3$, we have $L=2$ and $A=\varphi(L)=1$ (half of the integers are a multiple of $2$ and half of them are not).

    In the induction step, we only have to consider $m-1$ equal to a prime, or the square of a prime.
    Let $L,A$ be the values for $m-1$ and $A',L'$ the values for $m.$

    \textbf{Case $m-1=p$:} Compared with $m-1$, for $m$ we find that 
    $L'=pL,$
    $\varphi(L')=(p-1)\varphi(L)$ and
    $A'=A(p-1)+ \varphi(L)$.
    The latter since for every residue modulo $L$, there are $p-1$ possibilities modulo $pL$ that are not a multiple of $p,$ and one which is a multiple of $p.$

    Now \begin{equation}\label{eq:p}\frac{A'}{\varphi(L')}=\frac{A}{\varphi(L)}+\frac{1}{p-1}\end{equation} and 
    $$\frac{A'}{L'}=\frac{A(p-1)+ \varphi(L)}{pL}\le \frac{A}{L}$$ since $\varphi(L) \le A$ by the induction hypothesis.

    \textbf{Case $m-1=p^2$:} Compared with $m-1$, for $m$ we now find that 
    $L'=pL,$
    $\varphi(L')=p\varphi(L)$ and
    $A'=Ap-\frac{\varphi(L)}{p-1}$, implying immediately that $\frac{A'}{L'}<\frac{A}{L}.$
    The latter since for every integer $0<x\le L$ that is not a multiple of $p$ that has one divisor among $\{2,3,\ldots, m-2\},$ each integer of the form $iL+x$ with $0 \le i \le p-1$ has the same property.
    For an integer $0 \le x \le L$ that only has the divisor $p$ among $\{2,3,\ldots, m-2\},$ there are $p-1$ choices for $0 \le i \le p-1$ such that $p^2 \nmid iL+x.$ Here there are $\varphi(L/p)=\frac{\varphi(L)}{p-1}$ choices for $x$, since $x=px'$ where $x' \le \frac Lp$ and $x'$ is relative prime with the other primes less than $p^2$ and thus $\frac{L}{p}$.
   We further have that \begin{equation}\label{eq:p^2}
       \frac{A'}{\varphi(L')}=\frac{A}{\varphi(L)}-\frac{1}{p(p-1)}.
   \end{equation}

Since $\frac{1}{3-1}=\frac{1}{2(2-1)}$ and $\frac{1}{5-1}>\frac1{3(3-1)}+\frac1{5(5-1)}$, from \cref{eq:p} and~\cref{eq:p^2} we deduce that $\frac{A}{\varphi(L)} \ge 0$ and the latter is strict once $m \ge 6.$

We conclude that both statements are true by induction, and thus $\delta_1(1,m)$ is indeed non-increasing.
\end{proof}

\begin{remark}
    The above recursions can also be implemented to compute 
 $\delta_1(n,m)$ efficiently for small $n$, leading to a linear time program as a function of $m$. This has been done for $n \in \{2,3\}$ in~\cite[doc. \texttt{Recurs\_n2} and \texttt{Recurs\_n3}]{C25github_EP692}
\end{remark}

Next, we prove there are superpolynomially many  local maxima.


\begin{thm}
    For some fixed $c>0$, the sequence $(\delta_1(n,m))_{m \ge n+2}$ contains $\omega(\exp(n^c))$ many local maxima.
\end{thm}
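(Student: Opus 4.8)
The plan is to exhibit, for a suitable $n$, many values of $m$ where the sequence $(\delta_1(n,m))_m$ goes up and then down. The natural source of local extrema is the contrast between two kinds of step $m-1 \to m$: when $m-1$ is a prime $p$, adding $p$ to the pool of divisors tends to *increase* $\delta_1$ (it creates new integers with exactly one divisor in the window, namely multiples of $p$ coprime to everything else already counted), whereas when $m-1$ is composite, adding a new available composite divisor tends to *decrease* $\delta_1$ (an integer divisible by that composite often already has a divisor in the window, so it gets removed from the "exactly one" class). More precisely, I would track $\delta_1(n,m)$ via its dependence on the indicator that $x$ has $0$, $1$, or $\ge 2$ divisors in $(n,m)$, and write down the one-step change $\delta_1(n,m) - \delta_1(n,m-1)$ as a signed sum over residue classes that changes sign depending on whether $m-1$ is prime or highly composite.

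First I would fix $n$ and choose a long arithmetic-progression-like interval $I$ of integers around size $\exp(\Theta(n^c))$ chosen so that $I$ contains many primes $p$ each of which is immediately followed (within a bounded number of steps) by an integer $m-1$ that is a product of two primes both larger than $n$; by standard results on the distribution of primes and of $P_2$-numbers (products of two primes), one can find $\omega(\exp(n^c))$ such alternating pairs inside an interval of the required length. Second, for each such pair I would show the sign pattern: $\delta_1(n,\cdot)$ strictly increases across the prime step and strictly decreases across the nearby composite step, using the explicit residue-class bookkeeping — the key quantitative input being that the positive contribution from a prime $p > n$ is of order $\frac1p \cdot (\text{density of } x \text{ with no divisor in } (n,p))$, which is bounded below, while across a $qq'$ step with $q,q' > n$ one removes a positive-density set of integers (those divisible by $qq'$ but by nothing else in the window) from the "exactly one" class. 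Third, I would conclude that each such alternation forces a local maximum between the prime step and the composite step, and that these local maxima are distinct, giving $\omega(\exp(n^c))$ of them.

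The hard part will be making the sign analysis robust: the one-step differences $\delta_1(n,m)-\delta_1(n,m-1)$ are sums of many terms of both signs coming from all previously available divisors, and I need to show the "main" term (the new prime, resp. the new composite) dominates uniformly over a superpolynomially long range of $m$. This requires controlling the density of integers in a given residue class that have exactly $j$ divisors in a long window $(n,m)$ — essentially a uniform lower bound, away from $0$ and $1$, on $\delta_0(n,m)$ and related quantities, valid for all $m$ up to $\exp(\Theta(n^c))$. I expect to handle this by a sieve/second-moment argument: the number of divisors in $(n,m)$ of a random integer has mean $\sim \log\frac{\log m}{\log n}$ and concentrates, so for $m$ not astronomically larger than $n$ the sets "no divisor in $(n,m)$" and "exactly one divisor in $(n,m)$" both retain density bounded away from $0$, which is exactly the regime where $\exp(n^c)$ lives. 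The secondary obstacle is purely combinatorial: guaranteeing enough prime/$P_2$ alternations in the chosen interval, which I would extract from the prime number theorem together with a count of $P_2$-numbers in short intervals.
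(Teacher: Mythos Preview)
Your overall architecture---show the sequence goes up at a prime step and down at a nearby composite step, then count alternations---is exactly the paper's strategy. But your choice of composite step is fatal as written. If $m-1=qq'$ with distinct primes $q,q'>n$, then both $q$ and $q'$ already lie in the window $(n,m-1)$. Any integer $x$ with $qq'\mid x$ therefore already has at least the two divisors $q,q'$ in $(n,m-1)$, so adding $qq'$ to the pool neither creates new ``exactly one'' integers (there are none with zero divisors and $qq'\mid x$) nor destroys any (there are none with exactly one divisor and $qq'\mid x$). Hence $\delta_1(n,m)=\delta_1(n,m-1)$ at such a step, and you get no decrease at all. The paper avoids this by taking the composite to be $2p$ with $p>n$ prime: now the small factor $2\le n$ is \emph{outside} the window, so an integer with $p$ as its unique divisor in $(n,2p)$ can also be even, and then it picks up $2p$ as a second divisor when the window extends to $(n,2p+1)$. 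This gives a genuine strict decrease $\delta_1(n,2p+1)<\delta_1(n,2p)$ for every prime $p>n$, with no quantitative input needed.

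There is a second gap in the prime step. The exact one-step change at a prime $p$ is
\[
\delta_1(n,p+1)-\delta_1(n,p)=\tfrac1p\bigl(\delta_0(n,p)-\delta_1(n,p)\bigr),
\]
so a lower bound on $\delta_0$ alone is not enough; you need the \emph{comparison} $\delta_0(n,p)>\delta_1(n,p)$. Your proposed second-moment heuristic (mean number of divisors $\sim\log\frac{\log m}{\log n}$) can plausibly give that both $\delta_0$ and $\delta_1$ are bounded away from zero in the regime $m=\exp(\Theta(n^c))$, but it does not by itself order them. The paper handles this by combining an elementary lower bound $\delta_0(n,m)>\frac{\log n}{2\log m}$ (via Mertens) with Ford's theorem $\delta_1(n,m)=O\!\left(\frac{\log\log(m/n)}{\log(m/n)}\right)$, which forces $\delta_0>\delta_1$ once $c$ is taken small enough. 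You will need some input of comparable strength on the $\delta_1$ side.
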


\begin{proof}
We start proving the following claim, which we will apply later.
 \begin{claim}\label{clm:delta0>delta1}
        There exists $c>0$ such that for every sufficiently large $n$ and $m= \Theta( \exp(3n^c)),$ $\delta_0(n,m+1)>\delta_1(n,m+1).$
    \end{claim}
    \begin{claimproof}
         Let $L=\lcm\{n+1,n+2, \ldots, m\}=\lcm\{1,2,\ldots, m\}.$
    By the definition of Euler's totient function and by Mertens (third) theorem~\cite{Mertens78}
    $\frac{ \varphi(L)}{L} = \prod_{p \le m} \frac{p-1}{p} \sim exp(-\gamma) \frac{1}{\log(m)} $, where $exp(\gamma)<2$ is a constant. As a corollary, we have $\frac{ \varphi(L)}{L}> \frac{1}{2\log(m)}.$
    A classical estimate of the harmonic numbers says $H_n =\sum_{i=1}^n \frac 1i > \log(n)$. 
       Using these two inequalities, we derive that
  $$\delta_0(n,m)= \frac{ \sum_{i=1}^n \varphi\left( \frac L i \right) }{L} \ge \frac{ \sum_{i=1}^n  \frac {\varphi (L)} i  }{L}\ge \frac{ H_n \cdot \varphi(L) }{L} > \frac{ \log(n)}{2 \log(m)}.$$

By~\cite[Thm.~4]{Ford08},
$$\delta_1(n,m)= O \left(  \frac{ \log \log (m/n) }{\log(m/n)} \right).$$
Assuming $c$ is chosen sufficiently small,
we conclude $\delta_1(n,m)<\delta_0(n,m)$.
\qedhere 
    \end{claimproof}
    We will prove that $\delta_1(n,p+1)>\delta_1(n,p)$ for the primes $p$ with $p= \Theta( \exp(3n^c))$ and
    $\delta_1(n,2p+1)<\delta_1(n,2p)$ for every prime $p>n.$

    The result then follows from taking the longest sequences $(p_i)_i$ and $(q_i)_i$ of primes satisfying $\exp(3n^c)<p_1<2q_1<p_2<2q_2<p_3<\ldots<p_r<2q_r<2\exp(3n^c).$
    By a result on prime gaps~\cite{BHP01}, we know $r=\omega(\exp(0.475\cdot 3n^c)))$.

    To prove that $\delta_1(n,p+1)>\delta_1(n,p)$,
    note that $\delta_1(n,p+1)=\frac{p-1}{p} \delta_1(n,p) + \frac{1}{p} \delta_0(n,p),$ and this is larger than $\delta_1(n,p)$ by~\cref{clm:delta0>delta1} for $p=\Theta( \exp(3n^c))$.

    The inequality $\delta_1(n,2p+1)<\delta_1(n,2p)$ for a prime $p>n$ is almost trivial.
    Since multiples of $2p$ are multiples of $p$, there are no numbers whose only divisor in $(n,2p+1)$ is $2p.$ 
    On the other hand, there are multiples of $2p$ (this only depends on the residue modulo $\lcm\{n+1,\ldots,2p\}$) that have only one divisor in $(n,2p),$ but with two divisors in $(n,2p+1).$ 
    \qedhere
\end{proof}

\section{Main proofs for problem $690$}\label{sec:main690}

In~\cref{sec:main}, we noted that unimodularity in problem $692$ was not true, due to the difference of extending the range with a prime or a composite number.
In problem $690$, we always extend with a prime and the fact that unimodality is not always true now comes from the irregularity of prime gaps.

Similar to the results in~\cref{sec:main}, there are only a few cases, $1\le k\le 3$, for which unimodularity is true.
We also compute that it is not true for $4 \le k \le 20.$ The computational results indicate that unimodality cannot be expected for any $k \ge 4$, but we did not bother proving this for all such $k$ (note that from the proof one can easily deduce that every sequence $d_k(p)$ is eventually decreasing).

\begin{thm}
    For every $k \in \{1,2,3\}$, the sequence $d_k(p)$ is unimodular.
    The sequence $d_k(p)$ is not unimodular for every $4 \le k \le 20.$
\end{thm}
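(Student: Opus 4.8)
The statement has two parts: unimodality of $d_k(p)$ for $k\in\{1,2,3\}$, and non-unimodality for $4\le k\le 20$. I would treat them separately. For the positive part, the plan is to find an explicit formula for $d_k(p)$ in terms of the prime $p$ and its neighbours. If $p$ is the $j$-th prime, write $p=p_j$ and let $p_{j-1},p_{j+1}$ be the adjacent primes (with the convention $p_0=1$). An integer has $p$ as its $k$-th prime exactly when it is divisible by $p$, not divisible by any of $p_1,\dots,p_{k-1}$, and (to make $p$ the $k$-th rather than a later prime) divisible by none of $p_1,\dots,p_{j-1}$ other than those forced — actually the clean statement is: the $k$-th smallest prime factor equals $p$. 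So the density is $d_k(p)=\frac1p\cdot\big(\text{density of integers coprime to }p_1\cdots p_{j-1}\text{ with exactly }k-1\text{ of the primes }p_1,\dots,p_{j-1}\big)$ — wait, that is vacuous; the correct count is: $x$ is divisible by exactly $0$ of $p_1,\dots,p_{j-1}$ beyond... Let me restate: $p=p_j$ is the $k$-th prime of $x$ iff $p\mid x$ and exactly $k-1$ of $p_1,\dots,p_{j-1}$ divide $x$. Hence
\[
d_k(p_j)=\frac1{p_j}\sum_{\substack{S\subseteq\{1,\dots,j-1\}\\ |S|=k-1}}\ \prod_{i\in S}\frac1{p_i}\prod_{i\in\{1,\dots,j-1\}\setminus S}\Big(1-\frac1{p_i}\Big).
\]
For $k=1$ this is $\frac1{p_j}\prod_{i<j}(1-1/p_i)$, for $k=2$ it is $\frac1{p_j}\prod_{i<j}(1-1/p_i)\sum_{i<j}\frac{1/p_i}{1-1/p_i}$, and similarly $k=3$. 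I would then show each such sequence (indexed by $j$, equivalently by $p$) first increases then decreases. The ratio $d_k(p_{j+1})/d_k(p_j)$ factors as $\frac{p_j}{p_{j+1}}(1-1/p_j)\cdot R_j$ where $R_j\ge1$ is a correction ($R_j=1$ for $k=1$, and for $k=2,3$ a ratio of the symmetric-function sums that tends to $1$); the monotonicity of the whole product is governed by comparing $\frac{p_j}{p_{j+1}}(1-1/p_j)$ — which is $<1$ once consecutive primes are not too far apart and $>1$ when there is a large prime gap right after $p_j$ — against $1/R_j$. For $k=1$ this reduces exactly to whether $p_{j+1}>p_j^2/(p_j-1)$, i.e. to the size of the gap $p_{j+1}-p_j$ versus roughly $1$, which happens only for the very first few primes, so $d_1$ is decreasing from the start and hence unimodal; $k=2,3$ need a short case analysis of the first several terms plus an argument that for large $j$ the sequence is strictly decreasing (using $R_j\to1$ and $p_{j+1}/p_j\to1$ together with $\prod(1-1/p_i)\to0$, so the product genuinely decays).

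For the negative part, $4\le k\le 20$, the plan is simply to exhibit, for each such $k$, three consecutive primes $p^-<p<p^+$ (all at least the $(k-1)$-st prime) with $d_k(p^-)<d_k(p)$ and $d_k(p)>d_k(p^+)$ occurring at a place where the sequence has already begun decreasing, or dually a local minimum — i.e. a violation of unimodality. Because $d_k$ is eventually decreasing (as remarked, and as follows from the decay argument above), it suffices to find one "bump" after the first descent: a prime gap large enough that inserting the next prime makes $d_k$ jump back up. Concretely I would point to the computer verification already cited (\cite[doc.~\texttt{Erdosproblem690}]{C25github_EP692} or the analogous file), and/or list one explicit witness triple of primes for each $k$ in the stated range using the closed formula above, so the reader can check by hand that $d_k$ is not unimodular. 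Since the formula for $d_k(p_j)$ is a finite rational expression in the first $j$ primes, each witness is a finite rational inequality.

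The main obstacle is the positive direction for $k=2$ and $k=3$: I need a genuinely rigorous proof that the sequence increases then decreases, not just numerics. The delicate point is bounding the correction factor $R_j$ (the ratio of elementary-symmetric-in-$1/p_i$ sums) tightly enough — I expect $1\le R_j\le 1+O(1/\log p_j)$ type bounds via $\sum_{i<j}1/p_i\sim\log\log p_j$ and $\sum_{i<j}1/p_i^2=O(1)$ — and combining this with prime-gap information so that the product $d_k(p_j)$ is verified to be strictly decreasing for all $j$ beyond some explicit small threshold $j_0(k)$, leaving only finitely many terms $j\le j_0(k)$ to check directly. Pinning down $j_0(k)$ small enough that the finite check is humane, while keeping the asymptotic bound valid from $j_0(k)$ onward, is the part that requires care; everything else is bookkeeping with the explicit density formula.
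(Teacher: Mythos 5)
Your overall architecture matches the paper's (explicit densities via independence of divisibility by distinct primes, a finite check on the first terms plus an eventual-decrease argument for $k\le 3$, and computational witnesses for $k\ge 4$), and your closed formula for $d_k(p_j)$ is exactly the paper's $d_k(p_i)=\delta_{k-1}(i-1)/p_i$ written out via elementary symmetric functions. But the step you yourself flag as the obstacle is a genuine gap, and the specific bounds you propose would not close it. Writing $x_i=1/(p_i-1)$ and $E_\ell=e_\ell(x_1,\dots,x_{j-1})$, the ratio is
\[
\frac{d_k(p_{j+1})}{d_k(p_j)}=\frac{p_j-1}{p_{j+1}}\,R_j,\qquad R_j=1+\frac{1}{p_j-1}\cdot\frac{E_{k-2}}{E_{k-1}},
\]
so strict decrease at step $j$ is equivalent to $E_{k-2}/E_{k-1}<p_{j+1}-p_j+1$. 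Since you cannot exclude gaps $p_{j+1}-p_j=2$ at any particular large $j$, you must prove $E_{k-2}/E_{k-1}<3$ beyond an explicit threshold. A bound of the shape $R_j\le 1+O(1/\log p_j)$ is therefore far too weak: the margin you must beat is $3/(p_j-1)$, which is much smaller than $1/\log p_j$. Likewise, ``$R_j\to1$, $p_{j+1}/p_j\to1$ and $\prod(1-1/p_i)\to0$, so the product genuinely decays'' is a non sequitur — a sequence tending to $0$ with ratios tending to $1$ need not be eventually monotone, and indeed the $k\ge4$ sequences also tend to $0$. (The argument is salvageable: $R_j-1$ carries an explicit factor $1/(p_j-1)$, and $E_0/E_1\le1$ for $k=2$ while $E_1/E_2\le 3$ with equality only at the first defined index for $k=3$; but none of this is in your write-up. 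There is also a small algebra slip for $k=1$: the ratio is $(p_j-1)/p_{j+1}$, which is $<1$ for every $j$, not the condition $p_{j+1}>p_j^2/(p_j-1)$.)

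It is worth noting how the paper avoids this analysis entirely. It works with the recursion $\delta_r(i)=\frac{p_i-1}{p_i}\delta_r(i-1)+\frac{1}{p_i}\delta_{r-1}(i-1)$ for the density $\delta_r(i)$ of integers with exactly $r$ prime factors among the first $i+1$ primes, and observes that once $\delta_{r-1}$ is non-increasing and $\delta_r$ has decreased once, $\delta_r$ is non-increasing from then on; combined with $d_k(p_i)=\delta_{k-1}(i-1)/p_i$ and $p_{i+1}>p_i$, this reduces the entire positive part to verifying finitely many initial terms, with no prime-gap estimates or symmetric-function asymptotics needed. Your negative part ($4\le k\le 20$) is fine in principle and matches the paper, but as stated it defers entirely to an unspecified computation; to be complete you would need to actually produce the witness inequalities, as the paper does in its appendix.
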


\begin{proof}
    Let the consecutive primes be ordered as $p_0=2, p_1=3, \ldots$ 
    Let $\delta_r(i),$ the density of integers with exactly $r$ prime divisors among $\{p_0, p_1, \ldots, p_i\}.$
    Note that $\delta_r(0)=\frac{1}{2}$ if $r \in \{0,1\}$ and $\delta_r(0)=0$ if $r>1.$

    Now, we prove the following simple recursion.

    \begin{claim}\label{clm:recursion}
        $\delta_0(i)=\frac{p_i-1}{p_i}\delta_0(i-1)$ for every $i \ge 1$\\
        $\delta_r(i)=\frac{p_i-1}{p_i}\delta_r(i-1) +\frac 1p \delta_{r-1}(i-1)$ for every $r, i \ge 1$
    \end{claim}
    \begin{claimproof}
        Let $L'=\prod_{j=0}^i p_j$ and $L=\prod_{j=0}^{i-1} p_j$.
        The first inequality follows from considering the Euler's totient function on $L'$.

        For every number modulo $L$, there are $p_i-1$ choices that are not a multiple of $p_i$, and one of them which is.
        This implies that for every $x \in \{1,2,\ldots, L\}$ which has $r$ prime factors among $p_j, 0 \le j \le i-1$, there are $p_i-1$ choices for $x'\in \{1,2,\ldots, L'\}$ with $r$ prime factors satisfying $x'\equiv x \pmod L$.
        For every $x \in \{1,2,\ldots, L\}$ with $r-1$ prime factors bounded by $p_{i-1},$ there is one $x'\in \{1,2,\ldots, L'\}$ which is a multiple of $p_i$ and satisfies $x'\equiv x \pmod L$ by the Chinese remainder theorem.
        These reductions are also valid in the other direction.
    \end{claimproof}

    A simple corollary of~\cref{clm:recursion} is that if $\delta_{r-1}(i)$ is non-increasing for $i \ge i_0$ and 
    $\delta_r(i')<\delta_r(i'-1)$ for some $i'\ge i_0,$
    then $\delta_{r}(i)$ is non-increasing for $i \ge i'.$

    It is trivial that $\delta_0$ is a decreasing sequence.

    We have $\delta_1(0)=\delta_1(1)=\frac 12,$ and the sequence is further decreasing.

    For $\delta_2,$ we verified that the sequence is decreasing from $i=23$ onwards.

    Now since $d_k(p_i)=\frac{\delta_{k-1}(i-1)}{p_i},$ we deduce easily that $d_k$ for $k \in \{1,2,3\}$ is unimodular by checking the first $25$ values, see~\cite[doc. \texttt{690\_k<=20}]{C25github_EP692}.
    For this, note that $\delta_{k-1}(i-1)<\delta_{k-1}(i)$ implies $\frac{\delta_{k-1}(i-1)}{p_i}<\frac{\delta_{k-1}(i)}{p_{i+1}}$.

    For $4 \le k \le 20,$ it has been checked in~\cite[doc. \texttt{690\_k<=20}]{C25github_EP692}. A few computations confirming non-unimodularity are also listed in~\cref{sec:app1}.
\end{proof}




\begin{thebibliography}{1}

\bibitem{BHP01}
R.~C. Baker, G.~Harman, and J.~Pintz.
\newblock The difference between consecutive primes. {II}.
\newblock {\em Proc. Lond. Math. Soc. (3)}, 83(3):532--562, 2001.

\bibitem{C25github_EP692}
S.~Cambie.
\newblock Code and data related to erdosproblem 690 and 692.
\newblock https://github.com/StijnCambie/ErdosProblems/blob/main/Erdosproblem692, 2025.

\bibitem{Er79e}
P.~Erd{\H{o}}s.
\newblock Some unconventional problems in number theory.
\newblock Ast{\'e}risque 61, 73-82 (1979)., 1979.

\bibitem{Ford08}
K.~Ford.
\newblock The distribution of integers with a divisor in a given interval.
\newblock {\em Ann. of Math. (2)}, 168(2):367--433, 2008.

\bibitem{Mertens78}
F.~Mertens.
\newblock Ein {B}eitrag zur analytischen {Z}ahlentheorie.
\newblock {\em J. Reine Angew. Math.}, 78:46--62, 1874.

\end{thebibliography}

\section*{Appendix}\label{sec: appendix}

\appendix

\section{First terms of sequences for problem $690$}\label{sec:app1}

Using the recursions from~\cref{clm:recursion}, we can compute the first values of the sequence $\delta_k$ and $d_k$ for small $k$ and conclude.

\begin{align*}
    (\delta_0(i))_{0 \le i \le 9} &= \left(\frac{1}{2}, \frac{1}{3}, \frac{4}{15}, \frac{8}{35}, \frac{16}{77}, \frac{192}{1001}, \frac{3072}{17017}, \frac{55296}{323323}, \frac{110592}{676039}, \frac{442368}{2800733}\right)\\
    (\delta_1(i))_{0 \le i \le 9} &= \left(\frac{1}{2}, \frac{1}{2}, \frac{7}{15}, \frac{46}{105}, \frac{44}{105}, \frac{288}{715}, \frac{33216}{85085}, \frac{613248}{1616615}, \frac{151296}{408595}, \frac{391584768}{1078282205}\right)\\
    (\delta_2(i))_{0 \le i \le 9} &= \left(0, \frac{1}{6}, \frac{7}{30}, \frac{4}{15}, \frac{326}{1155}, \frac{628}{2145}, \frac{992}{3315}, \frac{98304}{323323}, \frac{125568}{408595}, \frac{733440}{2369851}\right)\\
    (\delta_3(i))_{0 \le i \le 9} &= \left(0, 0, \frac{1}{30}, \frac{13}{210}, \frac{31}{385}, \frac{206}{2145}, \frac{1308}{12155}, \frac{81544}{692835}, \frac{738544}{5870865}, \frac{61026496}{462120945}\right)\\
    (\delta_4(i))_{0 \le i \le 9} &= \left(0, 0, 0, \frac{1}{210}, \frac{23}{2310}, \frac{1}{65}, \frac{734}{36465}, \frac{336}{13585}, \frac{35272}{1225785}, \frac{103905392}{3234846615}\right)
\end{align*}

Next, we consider the subsequence $d_k(p)$ ranging over all primes between $p_0=2$ (or $p_1=3$) and $p_{10}=31$.

\begin{align*}
    (d_1(p))_{2 \le p \le p_{10}} &= \left( \frac12, \frac{1}{6}, \frac{1}{15}, \frac{4}{105}, \frac{8}{385}, \frac{16}{1001}, \frac{192}{17017}, \frac{3072}{323323}, \frac{55296}{7436429}, \frac{110592}{19605131}, \frac{442368}{86822723}\right)\\
    (d_2(p))_{3 \le p \le p_{10}} &= \left(\frac{1}{6}, \frac{1}{10}, \frac{1}{15}, \frac{46}{1155}, \frac{44}{1365}, \frac{288}{12155}, \frac{33216}{1616615}, \frac{613248}{37182145}, \frac{151296}{11849255}, \frac{391584768}{33426748355}\right)\\
    (d_3(p))_{3 \le p \le p_{10}} &= \left(0, \frac{1}{30}, \frac{1}{30}, \frac{4}{165}, \frac{326}{15015}, \frac{628}{36465}, \frac{992}{62985}, \frac{98304}{7436429}, \frac{125568}{11849255}, \frac{733440}{73465381}\right)\\
    (d_4(p))_{3 \le p \le p_{10}} &= \left(0, 0, \frac{1}{210}, \frac{13}{2310}, \frac{31}{5005}, \frac{206}{36465}, \frac{1308}{230945}, \frac{81544}{15935205}, \frac{738544}{170255085}, \frac{61026496}{14325749295}\right)\\
    (d_5(p))_{3 \le p \le p_{10}} &= \left(0, 0, 0, \frac{1}{2310}, \frac{23}{30030}, \frac{1}{1105}, \frac{734}{692835}, \frac{336}{312455}, \frac{35272}{35547765}, \frac{103905392}{100280245065}\right)
\end{align*}

The first three partial sequences are decreasing once initial zeros are removed, and thus unimodular.

For the fourth sequence, 
$ \frac{206}{36465} < \frac{31}{5005}, \frac{1308}{230945}$.
The fifth sequence is not unimodular since 
$\frac{35272}{35547765}< \frac{336}{312455}, \frac{103905392}{100280245065}$.

\end{document}